\documentclass{amsart}
\usepackage{amsmath, amsthm, amsfonts, amssymb}
\usepackage{mathrsfs,graphicx}
\providecommand{\noopsort[1]{}}
\usepackage{bbm}
\usepackage{dsfont}
\usepackage{ifthen}
\numberwithin{equation}{section}
\usepackage{a4}
\usepackage{hyperref}
\usepackage{enumerate}

\newtheorem{thm}{Theorem}[section]

\newtheorem{prop}[thm]{Proposition}
\newtheorem{lem}[thm]{Lemma}

\theoremstyle{remark}
\newtheorem{rem}[thm]{Remark}

\newtheorem{example}[thm]{Example}

\theoremstyle{definition}
\newtheorem{defn}[thm]{Definition}

\newcommand{\coloneqq}{\mathrel{\mathop:}=}

\newcommand{\applied}[2]{\langle #1,#2\rangle}

\newcommand{\dx}{\:\mathrm{d}}

\newcommand{\eps}{\varepsilon}
\DeclareMathOperator{\lh}{span}
\newcommand{\one}{\mathbbm{1}}

\newcommand{\R}{\mathds{R}}

\newcommand{\N}{\mathds{N}}
\newcommand{\Z}{\mathds{Z}}
\newcommand{\Q}{\mathds{Q}}

\newcommand{\fix}{\mathrm{fix}}

\begin{document}
\title[Lattice of weakly continuous operators]{On the lattice structure of weakly continuous operators on the space of measures}
\author{Moritz Gerlach}
\address{Moritz Gerlach\\University of Ulm\\Institute of Applied Analysis\\89069 Ulm\\Germany}
\email{moritz.gerlach@uni-ulm.de}

\author{Markus Kunze}
\address{Markus Kunze\\DFG Research Training Group 1100\\University of Ulm\\89069 Ulm\\Germany}
\email{markus.kunze@uni-ulm.de}
\begin{abstract}
Consider the lattice of bounded linear operators on the space of Borel measures on a Polish space. We prove
that the operators which are continuous with respect to the weak topology induced by the bounded measurable functions form a sublattice
that is lattice isomorphic to the space of transition kernels.
As an application we present a purely analytic proof of Doob's theorem concerning stability of transition semigroups.
\end{abstract}

\subjclass[2010]{Primary: 47B65, Secondary: 60B10, 60J35}
\keywords{lattice structure, transition kernel, weak topology, Doob's theorem}

\maketitle

\section{Introduction}

Classical function spaces, such as 
$L^p(\Omega,\Sigma,\mu)$, the space of $p$-integrable functions on a measure space $(\Omega,\Sigma,\mu)$,
come with a natural ordering, which renders them into Banach lattices.
While it is clear how to compute the modulus of functions, the situation changes when one considers
operators on Banach lattices, ordered by the cone of positive operators.
Indeed, regular operators on a general Banach lattice 
need not have a modulus unless the space is order complete.
Moreover, even if the modulus exists, it is in general not clear how to compute it.

The situation improves when one restricts to certain classes of operators. For instance, if
a regular operator $T$ on $L^p(\Omega,\Sigma,\mu)$ is of the form
\[ (Tf)(x) = \int_\Omega k(x,y)f(y) \dx\mu(y) \]
for a measurable functions $k:\Omega\times\Omega \to \R$,
the modulus of $T$ is given by
\[ (\lvert T\rvert f)(x) = \int_\Omega \lvert k(x,y)\rvert f(y) \dx\mu(y).\]
Operators of this form are often called kernel operators and can be identified with the
band generated by the finite rank operators \cite[Prop IV.9.6]{schaefer1974}.
Moreover, kernel operators are characterized by an additional continuity condition due to Bukhvalov \cite{bukhvalov1978}
and appear naturally in the theory of partial differential equations, see for example \cite{ouhabaz2005}.

In the study of Markov processes one is interested in certain operators on the space of measures which describe the evolution
of distributions.
The probabilistic literature also uses the name kernel operators for these operators since they are 
associated with a transition kernel in the sense of Definition \ref{def:kernel} below.
To distinguish them from kernel operators in the sense above, we call them 
\emph{weakly continuous operators} as they are also characterized by a continuity condition, see Lemma \ref{l.weakcont}.

In Section \ref{sec:lattice} we show that the modulus of a transition kernel is again a transition kernel.
Consequently, the weakly continuous operators form a lattice.
In Section \ref{sec:sublattice} we consider the weakly continuous operators as a subspace of all regular operators on the space
of measures and prove that they are a countably order complete sublattice.
Hence, as for kernel operators, the computation of lattice operations of weakly continuous operators reduces
to the corresponding lattice operations for their transition kernels.
However, in contrast to kernel operators, weakly continuous operators are not an ideal, see Example \ref{ex:noideal}.


As an application of our results we give a purely analytic proof of a version of Doob's theorem
concerning the stability of one-parameter semigroups operating 
on the space of measures, which is due to Stettner \cite[Thm 1]{stettner1994},
see also \cite{seidler1997}. 
Our strategy is similar to the one in \cite{gerlach2012}, however due to the abstract results of Sections \ref{sec:lattice}
and \ref{sec:sublattice}
the proof simplifies as we can work directly within the lattice of weakly continuous operators.
This complements recent results about mean ergodicity of semigroups of weakly continuous operators, see \cite{gerlach2013}.

\section{The Lattice of Weakly Continuous Operators}
\label{sec:lattice}
Throughout, $\Omega$ denotes a Polish space and $\mathscr{B}(\Omega)$ its  Borel $\sigma$-algebra.
We denote by $\mathscr{M}(\Omega), B_b(\Omega)$ and $C_b(\Omega)$ the spaces of signed measures on 
$\mathscr{B}(\Omega)$, the space of bounded, Borel-measurable functions on $\Omega$ and the space 
of bounded continuous functions on $\Omega$, respectively.
We denote by $\applied{\; \cdot\;}{\; \cdot\;}$ the duality between $B_b(\Omega)$ and $\mathscr{M}(\Omega)$.

\begin{defn}
\label{def:kernel}
	A \emph{transition kernel} on $\Omega$ is a map $k: \Omega\times\mathscr{B}(\Omega)\to \R$ such that
	\begin{enumerate}[(a)]
	\item $A \mapsto k(x, A)$ is a signed measure for every $x\in \Omega$ and
	\item $x \mapsto k(x,A)$ is a measurable function for every $A\in \Sigma$.
	\end{enumerate}
	The total variation of the measure $k(x,\;\cdot\;)$ is denoted by $\lvert k \rvert(x,\;\cdot\;)$.
	The transition kernel $k$ is called \emph{bounded} if $\sup_{x\in \Omega} \lvert k\rvert (x, \Omega) <\infty$.
\end{defn}

We order the transition kernels on $\Omega$ pointwise, i.e.\ $k_1\leq k_2$ if and only if $k_1(x,A) \leq k_2(x,A)$ for 
all $x \in \Omega$ and $A \in \mathscr{B}(\Omega)$. With this ordering, the transition kernels form a lattice, as we show in Proposition \ref{prop:abskernel} below.
In its proof, we use the \emph{strict topology} $\beta_0$ on $C_b(\Omega)$ that is defined as follows. Let $\mathscr{F}_0$ be 
the space of functions $\varphi$ on $\Omega$ which vanish at infinity, i.e.\ given $\eps > 0$ there exists a compact 
set $K$ with $\lvert \varphi (x)\rvert \leq \eps$ for all $x \in \Omega\setminus K$. 
The strict topology $\beta_0$ on $C_b(\Omega)$ is the locally convex topology generated by the set of
seminorms $\{ p_\varphi : \varphi \in \mathscr{F}_0\}$ where $p_\varphi(f) \coloneqq \Vert \varphi f\Vert_\infty$.
This topology is consistent with the duality, i.e.\ $(C_b(\Omega),\beta_0)'=\mathscr{M}(\Omega)$,
see \cite[Thm 7.6.3]{jarchow1981}, and
it coincides with the compact open topology on norm bounded subsets of $C_b(\Omega)$ \cite[Thm 2.10.4]{jarchow1981}.
Important for the following proof is especially the fact that the lattice operations on $C_b(\Omega)$ are $\beta_0$-continuous,
which follows from \cite[V 7.1]{schaefer1971} since in the strict topology the origin has a neighborhood base of solid sets.

\begin{prop}
\label{prop:abskernel}
	If $k : \Omega\times \mathscr{B}(\Omega) \to \R$ is a transition kernel, then also $\lvert k\rvert : \Omega \times \mathscr{B}(\Omega) \to [0,\infty)$
	is a transition kernel. 
\end{prop}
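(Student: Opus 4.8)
The plan is to verify the two defining properties of a transition kernel for $\lvert k\rvert$ separately, the first being routine and the second carrying all the weight. Property (a) is immediate: for each fixed $x\in\Omega$ the map $A\mapsto k(x,A)$ is a finite signed measure, and the total variation of a finite signed measure is a finite positive measure, so $A\mapsto\lvert k\rvert(x,A)$ is a measure. Everything therefore reduces to property (b), namely that $x\mapsto\lvert k\rvert(x,A)$ is measurable for every fixed $A\in\mathscr{B}(\Omega)$.

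The first ingredient I would establish is an auxiliary measurability statement: for every $f\in B_b(\Omega)$ the map $x\mapsto\int_\Omega f\dx k(x,\,\cdot\,)$ is measurable. This holds for $f=\one_B$ by property (b) of $k$, extends to simple functions by linearity, and then to all of $B_b(\Omega)$ by the functional monotone class theorem, using that for each fixed $x$ the measure $k(x,\,\cdot\,)$ is finite, so that dominated convergence preserves measurability under bounded pointwise limits.

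The second, and decisive, ingredient is a countable description of the total variation. For $\nu\in\mathscr{M}(\Omega)$ one has $\lVert\nu\rVert=\sup\{\applied{f}{\nu}:f\in C_b(\Omega),\ \lVert f\rVert_\infty\leq1\}$, the passage from $B_b(\Omega)$ to $C_b(\Omega)$ being justified by the regularity of finite Borel measures on a Polish space. To turn this uncountable supremum into a countable one I would exploit the strict topology: since $\nu\in(C_b(\Omega),\beta_0)'$, the functional $f\mapsto\applied{f}{\nu}$ is $\beta_0$-continuous, so it suffices to have a countable $\beta_0$-dense subset $D$ of the unit ball of $C_b(\Omega)$. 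Starting from a countable $\beta_0$-dense subset of $C_b(\Omega)$ and applying the truncation $f\mapsto(f\wedge\one)\vee(-\one)$ --- which maps $C_b(\Omega)$ onto its unit ball and is $\beta_0$-continuous precisely because the lattice operations are --- yields such a $D$. Taking $D$ symmetric, we obtain $\lVert\nu\rVert=\sup_{f\in D}\applied{f}{\nu}$ for every $\nu\in\mathscr{M}(\Omega)$.

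Finally I would combine these ingredients. Fixing $A\in\mathscr{B}(\Omega)$ and writing $\nu_x\coloneqq k(x,A\cap\,\cdot\,)$, one has $\lvert k\rvert(x,A)=\lVert\nu_x\rVert=\sup_{f\in D}\int_A f\dx k(x,\,\cdot\,)$. Each map $x\mapsto\int_A f\dx k(x,\,\cdot\,)$ is measurable by the first ingredient applied to $\one_A f\in B_b(\Omega)$, and a countable supremum of measurable functions is measurable; hence $x\mapsto\lvert k\rvert(x,A)$ is measurable, establishing (b). I expect the main obstacle to be the second ingredient, the passage from the norm characterization of the total variation to a genuinely countable supremum. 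This is exactly where the Polish assumption enters, guaranteeing a countable $\beta_0$-dense set, and where the $\beta_0$-continuity of the lattice operations is indispensable, since it ensures that truncation keeps the approximating family inside the unit ball without destroying density.
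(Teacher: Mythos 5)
Your proposal is correct, and it reaches the conclusion by a genuinely different organization of the final step. The core construction is the same as the paper's: a countable set that is $\beta_0$-dense in the unit ball of $C_b$, obtained by truncating a countable $\beta_0$-dense subset via $f\mapsto (f\wedge\one)\vee(-\one)$ and using the $\beta_0$-continuity of the lattice operations, combined with the fact that $C_b$ is norming for measures on a Polish space. (You assert rather than construct the countable $\beta_0$-dense subset of $C_b(\Omega)$; the paper builds it explicitly from a countable measure-separating set $M$ via rational spans and a Hahn--Banach argument, so you should at least cite or reproduce that step.) Where you diverge is in handling a general Borel set $A$: the paper first proves measurability of $x\mapsto \lvert k\rvert(x,F)$ for \emph{closed} $F$, working with $C_b(F)$, and then extends to all of $\mathscr{B}(\Omega)$ by a monotone class argument over sets; you instead restrict the \emph{measure}, writing $\lvert k\rvert(x,A)=\lVert k(x,A\cap\;\cdot\;)\rVert=\sup_{f\in D}\bigl\lvert\int_\Omega \one_A f\, \mathrm{d}k(x,\;\cdot\;)\bigr\rvert$, and push the monotone class argument into the standard auxiliary fact that $x\mapsto\int_\Omega h\,\mathrm{d}k(x,\;\cdot\;)$ is measurable for every $h\in B_b(\Omega)$. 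This is arguably slicker: a single countable set $D\subset C_b(\Omega)$ works simultaneously for all $x$ and all $A$, and no separate class of sets needs to be shown to be a $\sigma$-algebra. The paper's route, on the other hand, keeps all test functions continuous on the relevant domain and makes the role of regularity of the measures more visible. Both are valid; just make sure the identity $\lvert k\rvert(x,A)=\lvert k(x,A\cap\;\cdot\;)\rvert(\Omega)$ (immediate from the Hahn decomposition) and the existence of the countable $\beta_0$-dense set are stated explicitly.
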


\begin{proof}
  It is well-known that on a Polish space $\Omega$ the space $C_b(\Omega)$ of bounded and continuous functions is norming 
  for the measures, see, e.g., \cite[Example 2.4]{kunze2011}.
  We thus have
     \begin{align}
		\label{eqn:variation}
		\lvert k\rvert (x,\Omega) = \sup_{\substack{f\in C_b(\Omega)\\\lVert f\rVert\leq 1}} \lvert \applied{f}{k(x,\;\cdot\;)}\rvert
	 \end{align}
	which remains obviously true if we replace $\Omega$ with a closed subset $F$ of $\Omega$.
	Now we construct a countable set $D$ independent of $x$ such that \eqref{eqn:variation} holds even if we take
	the supremum only over the set $D$. It then follows
	that $x \mapsto |k|(x,F)$ is measurable for every closed set $F$ as a supremum of countable many measurable functions.
	A monotone class argument 
	then shows that $x \mapsto |k|(x,A)$ is measurable for all $A \in \mathscr{B}(\Omega)$.

	So fix a closed set $F \subset \Omega$. By \cite[Thm 6.3]{kunze2011} 
	there exists a countable set $M \subset C_b(F)$ such that
	for all measures $\mu\in \mathscr{M}(F)$, $\mu\neq 0$, there exists $f \in M$ with $\applied{\mu}{f} \neq 0$.
	
	We denote by $S \coloneqq \lh_\Q M$ the linear span of $M$ with rational coefficients. Obviously, 
	the $\beta_0$-closure of $S$ is the same as the $\beta_0$-closure of $\lh M$, which is dense in $C_b(F)$.
	Indeed, if $g\in C_b(F)$ does not belong to the $\beta_0$-closure of $\lh M$, then, by the Hahn-Banach theorem,
    we find
	$\mu \in \mathscr{M}(F)$ such that $\applied{\mu}{f}=0$ for all $f\in \lh M$ whereas $\applied{\mu}{g}\neq 0$.
	Since $\mu$ vanishes in particular on the separating set $M$, 
	it follows that $\mu=0$ --- a contradiction.

	Now we define $D \coloneqq \{ f\wedge \mathds{1} \vee (-\mathds{1}) : f \in S\}$. Since $S$ is $\beta_0$-dense in
	 $C_b(F)$ and the lattice operations are $\beta_0$-continuous, $D$ is $\beta_0$-dense in the closed unit ball 
	 of $C_b(F)$.

	Let $x\in \Omega$ and $f\in C_b(F)$, $\lVert f\rVert \leq 1$. Given $\eps>0$ we find $g\in D$ such that 
	$\lvert \applied{g-f}{k(x,\;\cdot\;)} \rvert \leq \eps$. Hence,
	\begin{align*}
	\lvert \applied{f}{k(x,\;\cdot\;)}\rvert &\leq \lvert \applied{g}{k(x,\;\cdot\;)}\rvert +\eps 
	\leq \sup_{g\in D} \lvert \applied{g}{k(x,\;\cdot\;)}\rvert + \eps\\
	&\leq \sup_{\substack{g\in C_b(F)\\\lVert g\rVert\leq 1}} \lvert \applied{g}{k(x,\;\cdot\;)}\rvert + \eps = \lvert k\rvert (x,A)+
	    \eps.
	\end{align*}
	From this it follows that
	\[ \lvert k\rvert(x,A) = \sup_{f\in D} \lvert \applied{f}{k(x,\;\cdot\;)}\rvert\]
	as desired. This finishes the proof.
\end{proof}

To each bounded transition kernel $k$, we can associate an operator $T \in \mathscr{L}(\mathscr{M}(\Omega))$ by setting
\begin{equation}\label{eq.kernel}
(T\mu)(A)  \coloneqq \int_\Omega k(x,A)\, \dx\mu (x)\, .
\end{equation}

The following characterization of operators of this form follows from Propositions 3.1 and 3.5 of \cite{kunze2011}.

\begin{lem}\label{l.weakcont}
Let $T \in \mathscr{L}(\mathscr{M}(\Omega))$. The following are equivalent:
\begin{enumerate}[(i)]
\item There exists a bounded transition kernel $k$ such that $T$ is given by \eqref{eq.kernel}.
\item The norm adjoint $T^*$ of $T$ leaves $B_b(\Omega)$ invariant.
\item The operator $T$ is continuous in the $\sigma(\mathscr{M}(\Omega), B_b(\Omega))$ topology.
\end{enumerate}
\end{lem}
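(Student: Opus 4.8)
The plan is to establish the three implications that close a cycle: (i)$\Rightarrow$(ii), (ii)$\Rightarrow$(iii), and (iii)$\Rightarrow$(i). Before computing anything I would fix the interpretation of the duality: for $f \in B_b(\Omega)$ and $\mu \in \mathscr{M}(\Omega)$ we have $\applied{f}{\mu} = \int_\Omega f \dx\mu$, and $T^*$ denotes the norm adjoint acting on the dual of $\mathscr{M}(\Omega)$. The key observation that organizes everything is that $B_b(\Omega)$ embeds canonically into $\mathscr{M}(\Omega)'$, so the condition in (ii) makes sense, and the topology in (iii) is precisely the weak topology for which $B_b(\Omega)$ is the predual pairing. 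Since the statement is advertised as following from Propositions 3.1 and 3.5 of \cite{kunze2011}, I would aim to reduce each implication to a clean duality computation rather than reprove those propositions.

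First I would prove (i)$\Rightarrow$(ii). Assuming $T$ is given by \eqref{eq.kernel}, I would compute $\applied{f}{T\mu}$ for $f \in B_b(\Omega)$ by Fubini's theorem: $\applied{f}{T\mu} = \int_\Omega \int_\Omega f(y) \dx k(x,\;\cdot\;)(y) \dx\mu(x)$, which exhibits $T^*f$ as the function $x \mapsto \int_\Omega f(y)\, k(x,\dx y)$. The boundedness of the kernel guarantees this function is bounded, and measurability in $x$ follows from Definition \ref{def:kernel}(b) together with a standard monotone-class argument (first for indicators $f = \one_A$, where $T^*f(x) = k(x,A)$ is measurable by assumption, then for simple functions, then for general $f \in B_b(\Omega)$ by bounded convergence). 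Hence $T^*f \in B_b(\Omega)$, giving (ii). The implication (ii)$\Rightarrow$(iii) is the soft one: if $T^*$ maps $B_b(\Omega)$ into itself, then for every $f \in B_b(\Omega)$ the functional $\mu \mapsto \applied{f}{T\mu} = \applied{T^*f}{\mu}$ is $\sigma(\mathscr{M}(\Omega),B_b(\Omega))$-continuous because $T^*f \in B_b(\Omega)$; since continuity of a linear map into a locally convex space is tested against the generating seminorms, this is exactly $\sigma$-$\sigma$ continuity of $T$.

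The implication (iii)$\Rightarrow$(i) is where I expect the real work, and it is the step I would lean on \cite{kunze2011} to supply. The strategy is to use $\sigma$-continuity of $T$ to produce the adjoint action on $B_b(\Omega)$: weak continuity means that for each $f \in B_b(\Omega)$ the composite $\mu \mapsto \applied{f}{T\mu}$ is $\sigma(\mathscr{M}(\Omega),B_b(\Omega))$-continuous, hence represented by some element of $B_b(\Omega)$, which we call $T^*f$. One then \emph{defines} the candidate kernel by $k(x,A) \coloneqq (T^*\one_A)(\delta_x)$, equivalently $k(x,A) = \applied{\one_A}{T\delta_x}$, and must verify the two defining properties of a transition kernel. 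Property (a), that $A \mapsto k(x,A)$ is a signed measure, is immediate since it equals $(T\delta_x)(A)$ and $T\delta_x \in \mathscr{M}(\Omega)$; property (b), measurability of $x \mapsto k(x,A)$, is exactly the statement that $T^*\one_A \in B_b(\Omega)$, which holds by the weak continuity assumption. The main obstacle is verifying that the integral formula \eqref{eq.kernel} genuinely reproduces $T$ on \emph{all} of $\mathscr{M}(\Omega)$ and not merely on finite combinations of Dirac measures: one has $(T\mu)(A) = \applied{\one_A}{T\mu} = \int_\Omega (T^*\one_A)(x) \dx\mu(x) = \int_\Omega k(x,A) \dx\mu(x)$, where the middle equality is precisely where weak continuity is used to pass the pairing inside, and boundedness of the kernel, $\sup_x \lvert k\rvert(x,\Omega) < \infty$, follows from uniform boundedness of $T$ together with Proposition \ref{prop:abskernel} controlling the total variation. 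This density-and-extension argument, together with the measure-theoretic justification for exchanging the pairing with the integral, is the technical heart of the lemma.
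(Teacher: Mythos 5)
The paper does not actually prove this lemma: it is stated as a direct consequence of Propositions 3.1 and 3.5 of \cite{kunze2011}, so there is no in-text argument to compare against. Your proposal supplies a correct, self-contained proof of the cycle (i)$\Rightarrow$(ii)$\Rightarrow$(iii)$\Rightarrow$(i) along the lines one would expect those cited propositions to follow, and all three implications are sound: the monotone-class argument for (i)$\Rightarrow$(ii), the observation that $\sigma$-$\sigma$ continuity of $T$ is exactly the statement that $f\circ T$ lies in $B_b(\Omega)$ for every $f\in B_b(\Omega)$, and the definition $k(x,A)\coloneqq (T\delta_x)(A)$ for (iii)$\Rightarrow$(i). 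Two small points are worth tightening. First, boundedness of the kernel in (iii)$\Rightarrow$(i) does not require Proposition \ref{prop:abskernel}: one has $\lvert k\rvert(x,\Omega)=\lVert T\delta_x\rVert\leq\lVert T\rVert$ directly from the total variation norm, and citing Proposition \ref{prop:abskernel} here would be circular in spirit since that proposition concerns measurability of $x\mapsto\lvert k\rvert(x,A)$, not boundedness. Second, you somewhat overstate the difficulty of the final verification: there is no genuine ``density-and-extension argument'' needed, because once weak continuity (together with the standard fact that $(\mathscr{M}(\Omega),\sigma(\mathscr{M}(\Omega),B_b(\Omega)))'=B_b(\Omega)$) guarantees $T^*\mathds{1}_A\in B_b(\Omega)$, the chain $(T\mu)(A)=\applied{\mathds{1}_A}{T\mu}=\applied{T^*\mathds{1}_A}{\mu}=\int_\Omega k(x,A)\dx\mu(x)$ holds for every $\mu$ by the definition of the adjoint and of the pairing; the only thing to check is that the function representing $T^*\mathds{1}_A$ is $x\mapsto(T\delta_x)(A)$, which follows by testing against Dirac measures. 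Neither point is a gap; the proof is correct.
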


\begin{defn}
We call an operator \emph{weakly continuous} if it satisfies the equivalent conditions in Lemma \ref{l.weakcont}.
In this case, the transition kernel $k$ from (i) of the Lemma is called the \emph{associated transition kernel}.
We write $\mathscr{L}(\mathscr{M}(\Omega), \sigma)$ for the space of weakly continuous operators.
\end{defn}

Now let $T_1, T_2$ be weakly continuous operators  with associated transition kernels $k_1, k_2$. Noting that 
$\applied{\mathds{1}_A}{T_j\delta_x} = k_j(x,A)$ for $j=1,2$, we see that $T_1 \leq T_2$ as operators 
on $\mathscr{M}(\Omega)$ if and only if $k_1 \leq k_2$ as transition kernels. Thus the correspondence between a weakly continuous
operator and its transition kernel is actually a lattice isomorphism. 
We thus obtain immediately from Proposition \ref{prop:abskernel} the following result.

\begin{thm}
\label{thm:weakcontlattice}
	The space $\mathscr{L}(\mathscr{M}(\Omega),\sigma)$ is a lattice in its natural ordering inherited from
	$\mathscr{L}(\mathscr{M}(\Omega))$.
\end{thm}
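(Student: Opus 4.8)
The plan is to transport the lattice structure from the bounded transition kernels to the operators, using the order isomorphism recorded just before the statement. Since the assignment of a weakly continuous operator $T$ to its associated transition kernel $k$ is a bijection by Lemma \ref{l.weakcont} and satisfies $T_1 \leq T_2$ if and only if $k_1 \leq k_2$, it suffices to prove that the bounded transition kernels on $\Omega$, ordered pointwise, form a lattice; the lattice operations then transfer verbatim to $\mathscr{L}(\mathscr{M}(\Omega),\sigma)$.

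First I would record that the bounded transition kernels form an ordered vector space under the pointwise operations $(k_1+k_2)(x,A) = k_1(x,A)+k_2(x,A)$ and $(\lambda k)(x,A) = \lambda\,k(x,A)$. Conditions (a) and (b) of Definition \ref{def:kernel} are stable under these operations, and boundedness is preserved because $\lvert k_1+k_2\rvert(x,\Omega) \leq \lvert k_1\rvert(x,\Omega) + \lvert k_2\rvert(x,\Omega)$; the positive cone consists of those $k$ with $k(x,A) \geq 0$ for all $x$ and $A$.

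The heart of the argument is to produce least upper bounds, and by the standard characterization of vector lattices it is enough to show that $k \vee 0$ exists for every bounded transition kernel $k$. For each fixed $x$ the space $\mathscr{M}(\Omega)$ is a vector lattice in which the modulus of the signed measure $k(x,\;\cdot\;)$ is exactly its total variation $\lvert k\rvert(x,\;\cdot\;)$, so the natural candidate is the pointwise positive part $k^+(x,\;\cdot\;) \coloneqq k(x,\;\cdot\;)^+ = \tfrac12\bigl(k(x,\;\cdot\;) + \lvert k\rvert(x,\;\cdot\;)\bigr)$. By Proposition \ref{prop:abskernel} the map $\lvert k\rvert$ is a transition kernel, hence so is $k^+ = \tfrac12(k+\lvert k\rvert)$, and it is plainly bounded. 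That $k^+$ is the supremum of $k$ and $0$ among transition kernels follows since it dominates both pointwise, while any bounded transition kernel $h$ with $h \geq k$ and $h \geq 0$ satisfies $h(x,\;\cdot\;) \geq k(x,\;\cdot\;)^+$ for every $x$ --- the latter being the supremum in $\mathscr{M}(\Omega)$ --- and therefore $h \geq k^+$.

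Once $k \vee 0$ is available for every $k$, the bounded transition kernels form a vector lattice, with $k_1 \vee k_2 = k_2 + (k_1-k_2)^+$ and $k_1 \wedge k_2 = k_1 - (k_1-k_2)^+$, and transporting along the order isomorphism finishes the proof. I expect the only genuine difficulty to lie in the measurability in $x$ of the pointwise lattice operations, that is, in verifying condition (b) for $k^+$; but this is precisely what Proposition \ref{prop:abskernel} delivers, so no real obstacle remains and the rest is routine bookkeeping about ordered vector spaces.
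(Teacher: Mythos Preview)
Your proposal is correct and follows essentially the same approach as the paper: the paper derives the theorem ``immediately from Proposition~\ref{prop:abskernel}'' via the order isomorphism between weakly continuous operators and bounded transition kernels noted just before the statement, and you have simply made explicit the routine step that $k^+ = \tfrac12(k+\lvert k\rvert)$ is again a bounded transition kernel and serves as the pointwise supremum $k\vee 0$.
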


The question arises whether also the $\sigma (\mathscr{M}(\Omega), C_b(\Omega))$-continuous operators form a lattice.
Equivalently, if the norm adjoint of a weakly continuous operator $T$ with transition kernel $k$ leaves 
the space $C_b(\Omega)$ invariant, does the same hold for the operator given by the kernel $\lvert k\rvert$? 
The following example shows that this is not the case.

\begin{example}
We consider the set $\Omega = (-\N)\cup \N \cup \{\infty\}$, where the neighborhoods of the 
extra point $\infty$ are exactly the sets which contain a set of the form 
$ \{\infty\} \cup \{ n, n+1, \dots\} \cup \{-n, -(n+1),\ldots\}$
for some $n\in\N$, whereas all other points are isolated. Note that $\Omega$ is homeomorphic with the space $\{0, \pm n^{-1}: n \in \N\}$ endowed with 
the topology inherited from $\R$. Thus $\Omega$ is Polish. We also note that a bounded function 
$f: \Omega \to \R$ is continuous if and only if $f(n) \to f(\infty)$ and also $f(-n) \to f(\infty)$
as $n\to \infty$.  

Now we define the transition kernel
\[ k(n,\;\cdot\;) \coloneqq \begin{cases} \delta_n  - \delta_{n+1}  &n\in \N\\
									0 & n\in (-\N)\cup \{ \infty\}. \end{cases} \]
Then
\[ \lvert k\rvert(n,\;\cdot\;) = \begin{cases} \delta_n + \delta_{n+1} &n\in \N\\
									0 & n \in (-\N)\cup \{\infty\}. \end{cases} \]									
Let $T, U\in \mathscr{L}(\mathscr{M}(\Omega),\sigma)$ denote the operators associated with $k$ and $\lvert k\rvert$, 
respectively. Then $T^*C_b(\Omega) \subset C_b(\Omega)$. However, $U^*$ maps the continuous function $\one_\Omega$
to the function $2\one_\N$ which is not continuous. 
This shows that the $\sigma(\mathscr{M}(\Omega), C_b(\Omega))$-continuous operators do not form a sublattice 
of the $\sigma(\mathscr{M}(\Omega),B_b(\Omega))$-continuous operators. Moreover, there exists 
no modulus of $T$ in the $\sigma(\mathscr{M}(\Omega),C_b(\Omega))$-continuous operators. Indeed, if $S$ was 
such a modulus, then the transition kernel of $S$ has to
coincide with $\lvert k\rvert$ on $\N \cup(-\N)$. In particular $S^*\one_\Omega (n) = 2 \one_{\N}(n)$ for $n\neq \infty$.
But this shows that $S^*\one_\Omega$ cannot be continuous --- a contradiction.
\end{example}

\section{Weakly Continuous Operators as a Sublattice of $\mathscr{L}(\mathscr{M}(\Omega))$}
\label{sec:sublattice}

Since $\mathscr{M}(\Omega)$ is a $L$-space, every bounded linear operator on $\mathscr{M}(\Omega)$ is regular and 
$\mathscr{L}(\mathscr{M}(\Omega))$ forms a Banach lattice with respect to the natural ordering, see \cite[Thm IV 1.5]{schaefer1974}.
Thus, every weakly continuous operator has a modulus in $\mathscr{L}(\mathscr{M}(\Omega))$. A natural question is whether this modulus can be different from 
the modulus in the space of weakly continuous operators.
This is not the case. In the following we show that the weakly continuous operators form a sublattice of
$\mathscr{L}(\mathscr{M}(\Omega))$.  More precisely,
we show that for $T\in \mathscr{L}(\mathscr{M}(\Omega),\sigma)$, the positive part $T^+$, taken in the vector lattice $\mathscr{L}(\mathscr{M}(\Omega))$,
is again weakly continuous. Moreover, if $k$ is the transition kernel associated to $T$,
then $T^+$ is associated to the transition kernel $k_+ = (\lvert k\rvert-k)/2$.

We recall that the positive part within $\mathscr{L}(\mathscr{M}(\Omega))$ of a weakly continuous operator $T$ with associated transition kernel $k$ is given by
\begin{align}
\label{eqn:pospart}
 T^+\mu = \sup_{0\leq \nu \leq \mu} T\nu = \sup_{0\leq \nu \leq \mu} \int_\Omega k(x,\;\cdot\;) \dx\nu(x) = \sup_{0\leq g\leq \mathds{1}} \int_\Omega g(x) k(x, \;\cdot\;)\dx\mu(x) 
 \end{align}
for every positive measure $\mu$.

\begin{lem}
\label{lem:alphaBn}
	Let $k$ be a transition kernel, $\alpha > 0$ and let $\mathscr{U}=\{ B_n : n\in\N\}$ be a countable basis of the topology on $\Omega$ that is closed under finite unions.
	Then 
	\[\{ k_+( \;\cdot\;,\Omega) > \alpha \} = \bigcup_{n\in\N} \{ k(\;\cdot\;,B_n) > \alpha\}.\]
\end{lem}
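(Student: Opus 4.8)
The plan is to fix $x \in \Omega$ and reduce the asserted set identity to the single pointwise equation
\[ \sup_{n\in\N} k(x,B_n) = k_+(x,\Omega). \]
Here $\nu := k(x,\;\cdot\;)$ is a signed measure by Definition \ref{def:kernel}(a), with Jordan decomposition $\nu = \nu^+ - \nu^-$, and $k_+(x,\;\cdot\;)$ is its positive part $\nu^+$; indeed, evaluating \eqref{eqn:pospart} at $\mu = \delta_x$ gives $k_+(x,\;\cdot\;) = \nu^+$, so that $k_+(x,\Omega) = \nu^+(\Omega)$. Granting the displayed equation, both inclusions follow at once: if $k_+(x,\Omega) > \alpha$ then the supremum exceeds $\alpha$, hence $k(x,B_n) > \alpha$ for some $n$ and $x$ lies in the right-hand side; conversely, if $k(x,B_n) > \alpha$ for some $n$, then $k_+(x,\Omega)$ is at least $k(x,B_n) > \alpha$ by the same equation.

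The inequality ``$\le$'' is trivial, since $k(x,B_n) = \nu^+(B_n) - \nu^-(B_n) \le \nu^+(B_n) \le \nu^+(\Omega)$ for every $n$. The reverse inequality ``$\ge$'' is the heart of the matter, and I expect it to be the only real obstacle. The idea is to trap almost all of the positive mass $\nu^+$ inside a basic open set while keeping the negative mass $\nu^-$ negligible; this is possible because finite Borel measures on a Polish space are Radon, hence inner and outer regular. Writing $\Omega = P \cup N$ for a Hahn decomposition, so that $\nu^+$ is carried by $P$ and $\nu^-$ by $N$, fix $\eps > 0$. Inner regularity of $\nu^+$ provides a compact set $K \subseteq P$ with $\nu^+(K) > \nu^+(\Omega) - \eps$, and since $\nu^-(K) = 0$, outer regularity of $\nu^-$ provides an open set $U \supseteq K$ with $\nu^-(U) < \eps$.

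It remains to replace $U$ by a single member of $\mathscr{U}$. As $\mathscr{U}$ is a basis, $U$ is a union of sets from $\mathscr{U}$; the compactness of $K$ lets me select a finite subfamily covering $K$, and since $\mathscr{U}$ is closed under finite unions their union is again some $B_N \in \mathscr{U}$ with $K \subseteq B_N \subseteq U$. Then
\[ k(x,B_N) = \nu^+(B_N) - \nu^-(B_N) \ge \nu^+(K) - \nu^-(U) > \nu^+(\Omega) - 2\eps, \]
and letting $\eps \to 0$ gives $\sup_n k(x,B_n) \ge \nu^+(\Omega) = k_+(x,\Omega)$, which together with ``$\le$'' proves the displayed equation. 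The closedness of $\mathscr{U}$ under finite unions is exactly what makes this last step clean: it converts the finite subcover of $K$ into a single index $n$. Without it one would instead approximate $U$ by an increasing sequence from $\mathscr{U}$, which still works but is less direct.
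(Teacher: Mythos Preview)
Your proof is correct and follows essentially the same route as the paper's: Hahn decomposition plus inner/outer regularity of finite Borel measures on a Polish space, with the closure of $\mathscr{U}$ under finite unions turning a finite cover of a compact set into a single basic set. One caveat: your appeal to \eqref{eqn:pospart} to justify $k_+(x,\cdot)=\nu^+$ is unnecessary and, read literally, circular---that formula describes the operator $T^+$, and identifying $T^+$ with the kernel $k_+$ is precisely what Lemma~\ref{lem:alphaBn} is being used to establish; the identity $k_+(x,\cdot)=\nu^+$ holds simply by the definition of $k_+$ as the pointwise positive part of the kernel (cf.\ Proposition~\ref{prop:abskernel}).
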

\begin{proof}
If $k(x,B_n)>\alpha$ for some $x\in \Omega$ and $n\in\N$, then clearly $k_+(x,\Omega) \geq k_+(x,B_n)\geq k(x,B_n)>\alpha$. This shows the inclusion ``$\supset$''.

Conversely, let $x\in \Omega$ with $k_+(x,\Omega)>\alpha$ be given. We consider the Hahn decomposition 
$\Omega= \Omega_+ \cup \Omega_-$
of the measure $k(x,\;\cdot\;)$. By assumption $k(x,\Omega_+)=k_+(x,\Omega)>\alpha$.
Since the measure $k(x,\;\cdot\;)$ is regular, there exists an open superset $U \supset \Omega_+$ with $k(x,U)>\alpha$.
Since $\mathscr{U}$ is closed under finite unions, using the regularity of $k(x,\;\cdot\;)$ again, we find
a base set $B_n \in \mathscr{U}$ with $p(x,B_n)>\alpha$.
\end{proof}

\begin{lem}
\label{lem:keylemma}
	Let $T\in\mathscr{L}(\mathscr{M}(\Omega),\sigma)$ with associated transition kernel $k$.
	Let $\alpha>0$ and $A\in \mathscr{B}(\Omega)$ such that $\alpha \mathds{1}_A < k_+(\;\cdot\;,\Omega)$. Then
	\[ (T^+\mu_A)(\Omega) \geq \alpha \mu(A) \]
	where $\mu_A$ denotes the measure $\mu(A\cap \;\cdot\;)$.
\end{lem}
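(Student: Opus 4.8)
The plan is to bound $(T^+\mu_A)(\Omega)$ from below using the representation \eqref{eqn:pospart} of $T^+$, but crucially by evaluating the supremum of measures on a well-chosen finite partition of the target space rather than on $\Omega$ directly. First I would reduce the hypothesis to the inclusion $A\subseteq\{k_+(\,\cdot\,,\Omega)>\alpha\}$ and invoke Lemma \ref{lem:alphaBn}: fixing a countable base $\mathscr U=\{B_n\}$ of the topology closed under finite unions, this writes $\{k_+(\,\cdot\,,\Omega)>\alpha\}$ as $\bigcup_n\{k(\,\cdot\,,B_n)>\alpha\}$. Splitting this union into disjoint pieces, I obtain Borel sets $A_n\subseteq A$ with $A=\bigsqcup_n A_n$ such that $k(x,B_n)>\alpha$ for every $x\in A_n$. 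This converts the information about the positive part $k_+$ into information about the plain values $k(x,B_n)$ on the fixed sets $B_n$, with the index $n$ chosen measurably in $x$.

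The heart of the argument is the inequality
\[ (T^+\mu_A)(\Omega)\;\geq\;\sum_j\int_A \bigl(k(x,E_j)\bigr)^+\dx\mu(x) \]
valid for every finite measurable partition $\{E_j\}$ of $\Omega$. To see this I would use that, by \eqref{eqn:pospart}, $T^+\mu_A$ dominates $\nu_g\coloneqq\int_A g(x)\,k(x,\,\cdot\,)\dx\mu(x)$ for every measurable $g$ with $0\leq g\leq\mathds{1}$. Since $T^+\mu_A-\nu_g\geq 0$ as a measure, testing on a single cell gives $(T^+\mu_A)(E_j)\geq\nu_g(E_j)$, and here I am free to pick a \emph{different} weight $g=g_j$ for each cell. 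Choosing $g_j=\mathds{1}_{\{k(\,\cdot\,,E_j)>0\}}$ makes $\nu_{g_j}(E_j)=\int_A(k(x,E_j))^+\dx\mu$, and summing $(T^+\mu_A)(\Omega)=\sum_j(T^+\mu_A)(E_j)$ over the partition yields the displayed bound.

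It then remains to choose the partition well. Fixing $N$, I would take $\{E_j\}$ to be the partition of $\Omega$ generated by $B_1,\dots,B_N$, so that each $B_n$ with $n\leq N$ is a union of cells. For $x\in A_n$ with $n\leq N$ the elementary estimate $\sum_j(k(x,E_j))^+\geq\sum_{E_j\subseteq B_n}k(x,E_j)=k(x,B_n)>\alpha$ then forces the integrand in the displayed bound to exceed $\alpha$ on $\bigcup_{n\leq N}A_n$ and to be nonnegative elsewhere, whence $(T^+\mu_A)(\Omega)\geq\alpha\,\mu(\bigcup_{n\leq N}A_n)$. Letting $N\to\infty$ and using continuity of $\mu$ from below gives $(T^+\mu_A)(\Omega)\geq\alpha\mu(A)$.

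The step I expect to be the main obstacle, and the reason a naive approach fails, is precisely the choice to evaluate on a partition with cell-dependent weights $g_j$. Estimating $(T^+\mu_A)(\Omega)$ by a single $\nu_g(\Omega)$, or by the supremum of $\nu_g(\Omega)$ over $g$, badly undercounts, because the family $\{\nu_g\}$ is not upward directed and the lattice supremum evaluated at $\Omega$ is strictly larger than the supremum of the masses; one sees this already for $T^+\delta_x$, whose mass is $k_+(x,\Omega)$ rather than $k(x,\Omega)^+$. Allowing a separate $g_j$ on each target cell is exactly what recovers the positive part of $k(x,\,\cdot\,)$ cell by cell, and refining the partition to resolve the sets $B_n$ is what makes the bound uniform enough in $x$ to reach $\alpha\mu(A)$.
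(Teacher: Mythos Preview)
Your proposal is correct and follows essentially the same route as the paper: decompose $A$ via Lemma~\ref{lem:alphaBn}, refine $B_1,\dots,B_N$ into the finite partition they generate, bound each cell $(T^+\mu_A)(E_j)$ using a cell-dependent weight $g_j$ in \eqref{eqn:pospart}, sum, and let $N\to\infty$. Your choice $g_j=\mathds{1}_{\{k(\,\cdot\,,E_j)>0\}}$ is a slight streamlining of the paper's choice $g_m=\mathds{1}_{\bigcup_{n\in N(m)}\Omega_n}$, yielding the clean intermediate bound $\sum_j\int_A k(x,E_j)^+\dx\mu$ in place of the paper's double-sum rearrangement, but the structure of the argument is otherwise the same.
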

\begin{proof}
	Let $(B_n)_{n\in\N}$ be a countable basis of the topology on $\Omega$ that is closed under finite unions.
	We define $E_n \coloneqq A \cap \{ k(\;\cdot\;, B_n)>\alpha\}$ for $n\in\N$. Then Lemma~\ref{lem:alphaBn} yields that
	\[ A = A\cap \{ k_+( \;\cdot\;,\Omega)>\alpha\} = \bigcup_{n\in\N} E_n.\]
	Defining $\Omega_1 \coloneqq E_1$ and $\Omega_n \coloneqq E_n \setminus (\cup_{k<n} E_k)$ for $n>1$ we obtain a 
	decomposition of $A$ in disjoint sets.
	Fix $\eps>0$.
	By the regularity of $\mu$ we find an index $N\in\N$ with
	\[ \mu\biggl(\bigcup_{n\leq N} \Omega_n \biggr) \geq \mu(A)-\frac{\eps}{\alpha}.\]
	We now refine the sets $B_1, \ldots, B_N$ further. We find disjoint Borel sets $\tilde B_1, \ldots, \tilde B_M$ such that
	(i) given $m\leq M$ and $n\leq N$ the set $\tilde B_m$ is either contained in $B_n$ or disjoint from $B_n$
	and (ii) we have
	\[\bigcup_{m\leq M} \tilde B_m = \bigcup_{n\leq N} B_n .\]
	We let $N(m) \coloneqq \{ n\leq N  : \tilde B_m \subset B_n\}$ 
	so that $B_n$ is the disjoint union of those $\tilde B_m$ where $n\in N(m)$. 
	By choosing $g$ in \eqref{eqn:pospart} as the characteristic of the set $\cup_{n\in N(m)} \Omega_n$, we find that
	\begin{align*}
	(T^+\mu_A)(\Omega) &\geq \sum_{m=1}^M (T^+\mu_A)(\tilde B_m)
	\geq \sum_{m=1}^M \int_{\bigcup_{n\in N(m)} \Omega_n} k(x,\tilde B_m) \dx\mu(x).
	\end{align*}
	Since the sets $\Omega_n$ as well as the sets $\tilde B_m$ are disjoint, we have that
	\begin{align*}
	\sum_{m=1}^M \int_{\bigcup_{n\in N(m)} \Omega_n} k(x,\tilde B_m) \dx\mu(x)
	&=\sum_{m=1}^M \sum_{n\in N(m)} \int_{\Omega_n} k(x,\tilde B_m) \dx\mu(x)\\
	&= \sum_{n=1}^N \sum_{\substack{m\leq M\\n\in N(m)}} \int_{\Omega_n} k(x,\tilde B_m) \dx\mu(x)\\
	&= \sum_{n=1}^N \int_{\Omega_n} k(x,B_n) \dx\mu(x).
	\end{align*}
	As $k(\;\cdot\;,B_n) > \alpha$ on $\Omega_n$, we conclude that
	\begin{align*}
	\sum_{n=1}^N \int_{\Omega_n} k(x,B_n) \dx\mu(x) > \alpha \mu\biggl(\bigcup_{n\leq N} \Omega_n \biggr) \geq \alpha \mu(A) - \eps
	\end{align*}
	which completes the proof.
\end{proof}

\begin{thm}
\label{thm:main}
	The $\mathscr{L}(\mathscr{M}(\Omega),\sigma)$ is a sublattice of $\mathscr{L}(\mathscr{M}(\Omega))$.
\end{thm}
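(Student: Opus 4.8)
The plan is to show that for $T\in\mathscr{L}(\mathscr{M}(\Omega),\sigma)$ with associated transition kernel $k$, the positive part $T^+$ formed in the Banach lattice $\mathscr{L}(\mathscr{M}(\Omega))$ is again weakly continuous and coincides with the operator $S\in\mathscr{L}(\mathscr{M}(\Omega),\sigma)$ associated with the kernel $k_+$. This suffices: by Theorem \ref{thm:weakcontlattice} and the lattice isomorphism onto the transition kernels, $S$ is precisely the positive part of $T$ taken inside $\mathscr{L}(\mathscr{M}(\Omega),\sigma)$, so the two positive parts agree. Since $\mathscr{L}(\mathscr{M}(\Omega),\sigma)$ is a linear subspace and every lattice operation is generated by the positive part via $R\vee R' = (R-R')^+ + R'$ and $R\wedge R' = -((-R)\vee(-R'))$, it follows that the subspace is closed under the lattice operations of $\mathscr{L}(\mathscr{M}(\Omega))$, i.e.\ it is a sublattice.

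One inclusion is immediate. Since $k_+\geq 0$ the operator $S$ is positive, and since $k_+\geq k$ the difference $S-T$ is the weakly continuous operator with the positive kernel $k_+-k$ and hence is positive as well. Thus $S$ is an upper bound of $\{T,0\}$ in $\mathscr{L}(\mathscr{M}(\Omega))$, and as $T^+ = T\vee 0$ is the least such upper bound, I obtain $T^+\leq S$. In particular $T^+\mu\leq S\mu$ as positive measures for every $\mu\geq 0$.

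For the reverse inequality it is enough, by the previous step, to compare total masses: if $(T^+\mu)(\Omega)\geq (S\mu)(\Omega)$ for all $\mu\geq 0$, then the two positive measures $T^+\mu\leq S\mu$ have equal mass and therefore coincide, whence $T^+ = S$ by linearity. Fixing $\mu\geq 0$ and noting $(S\mu)(\Omega) = \int_\Omega k_+(x,\Omega)\dx\mu(x)$, I would approximate this integral from below by simple functions $s=\sum_{i=1}^m \alpha_i \mathds{1}_{A_i}$ with $\alpha_i>0$ and disjoint sets $A_i\subset \{k_+(\;\cdot\;,\Omega)>\alpha_i\}$. Because the $A_i$ are disjoint we have $\sum_i \mu_{A_i}\leq \mu$, so positivity of $T^+$ together with Lemma \ref{lem:keylemma} gives
\[ (T^+\mu)(\Omega)\geq \sum_{i=1}^m (T^+\mu_{A_i})(\Omega)\geq \sum_{i=1}^m \alpha_i\mu(A_i) = \int_\Omega s\dx\mu. \]
Letting $s\uparrow k_+(\;\cdot\;,\Omega)$ and invoking monotone convergence yields $(T^+\mu)(\Omega)\geq (S\mu)(\Omega)$, as required.

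The main obstacle is this reverse inequality: Lemma \ref{lem:keylemma} delivers only the local mass bounds $(T^+\mu_{A_i})(\Omega)\geq \alpha_i\mu(A_i)$ on individual layers, and the real work lies in assembling them into the global estimate against $\int_\Omega k_+(x,\Omega)\dx\mu(x)$. The disjointness of the layer sets $A_i$ is what lets positivity of $T^+$ convert the sum of local bounds into a single lower bound for $(T^+\mu)(\Omega)$, and a careful simple-function approximation respecting the strict inequality in the hypothesis of Lemma \ref{lem:keylemma} is needed to pass to the integral in the limit.
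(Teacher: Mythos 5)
Your proposal is correct and follows essentially the same route as the paper: identify $T^+$ with the weakly continuous operator $S$ associated to $k_+$, obtain $T^+\le S$ from the fact that $S$ dominates $T$ and $0$, and establish the reverse inequality on total masses by approximating $\int_\Omega k_+(x,\Omega)\dx\mu(x)$ from below with simple functions supported on disjoint layers and invoking Lemma \ref{lem:keylemma}. The only cosmetic difference is that you pass to the limit by monotone convergence where the paper uses an $\eps$-approximation of the integral.
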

\begin{proof}
	Let $T\in\mathscr{L}(\mathscr{M}(\Omega),\sigma)$ with associated transition kernel $k$.
	We denote by $S$ the weakly continuous operator with transition kernel $k_+$ and we prove that $T^+ = S$.
	To that end, let $\mu>0$.
	Since it follows easily from \eqref{eqn:pospart} that $T^+\mu \leq S\mu$, it suffices to show that $(T^+\mu)(\Omega) = (S\mu)(\Omega)$.
	Let $\eps>0$ and 
	\[ f = \sum_{j=1}^M \alpha_j \mathds{1}_{A_j}\]
	be a simple function with coefficients $\alpha_j >0$ and pairwise disjoint sets $A_j \in \mathscr{B}(\Omega)$ such that
	$f(x) < k_+(x,\Omega)$ for all $x\in \Omega$ and
	\[ \int_\Omega \biggl(k_+(x,\Omega) - f(x)\biggr) \dx\mu(x) < \eps.\]
	Lemma \ref{lem:keylemma} yields that
	\begin{align*}
		(T^+\mu)(\Omega) &\geq \sum_{j=1}^M (T^+ \mu_{A_j}) (\Omega) \geq \sum_{j=1}^M \alpha_j \mu(A_j)\\
		&= \int_\Omega f(x) \dx\mu(x) \geq \int_\Omega k_+(x,\Omega)\dx\mu(x) - \eps\\
		&= (S\mu) (\Omega)-\eps.
	\end{align*}
	Hence $T^+\mu = S\mu$ and thus, since $\mu$ was arbitrary, $T^+ = S \in \mathscr{L}(\mathscr{M}(\Omega),\sigma)$.
\end{proof}

In contrast to the situation for kernel operators, which can be identified with the band generated by the finite rank operators
as described in the introduction, the weakly continuous operators are not a band in $\mathscr{L}(\mathscr{M}(\Omega))$. 
The following example shows that they are not even an ideal.

\begin{example}
\label{ex:noideal}
	Let $\Omega$ be a Polish space that admits atomless measures, e.g.\ $\Omega = \R$. Let
	$P:\mathscr{M}(\Omega) \to \mathscr{M}(\Omega)$ denote the band projection onto the band of atomless
	measures and define $\phi \coloneqq P^*\mathds{1}$.
	Then $0<\phi \leq \mathds{1}$ and $\phi \in \mathscr{M}(\Omega)^*\setminus B_b(\Omega)$ since $\applied{\phi}{\delta_x}=0$ for all $x\in \Omega$.
	For a measure $\mu > 0$ consider the positive rank one operator
	$T \coloneqq \phi \otimes \mu$ on $\mathscr{M}(\Omega)$. Then $T \leq \mathds{1}\otimes \mu \in \mathscr{L}(\mathscr{M}(\Omega),\sigma)$
	but $T^*\mathds{1} = \mu(\Omega) \phi \not\in B_b(\Omega)$ and hence $T\not\in\mathscr{L}(\mathscr{M}(\Omega),\sigma)$.
\end{example}

We conclude this section with an investigation of order completeness of 
the sublattice $\mathscr{L}(\mathscr{M}(\Omega),\sigma)$. 
We prove that this space is $\sigma$-order complete but not order complete.
Let us start with a well-known lemma.

\begin{lem}
\label{lem:monlimit}
	Let $(\mu_n)$ be an increasing sequence of positive measures on $\mathscr{B}(\Omega)$
	and $\nu\in\mathscr{M}(\Omega)$ such that
	$\mu_n \leq \nu$ for all $n\in\N$.
	Then $\mu \coloneqq \sup \mu_n$ is given by 
	$\mu(A) = \sup \mu_n(A)$ for all $A\in\mathscr{B}(\Omega)$.
\end{lem}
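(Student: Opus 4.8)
The plan is to exhibit the supremum explicitly and then check it has the right order-theoretic properties. Set $\lambda(A)\coloneqq\sup_{n\in\N}\mu_n(A)$ for every $A\in\mathscr{B}(\Omega)$; the goal is to show that $\lambda$ is a measure and that it coincides with the lattice supremum $\mu=\sup\mu_n$. First I would record that $\nu-\mu_n\geq 0$ and $\mu_n\geq 0$ force $\nu$ to be a positive measure with $\mu_n(A)\leq\nu(A)\leq\nu(\Omega)<\infty$, so the defining supremum is finite and $\lambda(A)\leq\nu(\Omega)$ for every $A$. Since the sequence $(\mu_n)$ is increasing, $\lambda(A)=\lim_n\mu_n(A)$ is a genuine monotone limit, which is what makes the subsequent interchange of limit and sum legitimate.

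The main step is to verify that $\lambda$ is countably additive, hence a bounded positive measure. Finite additivity and $\lambda(\emptyset)=0$ are immediate: for disjoint $A,B$ the identity $\lambda(A\cup B)=\lim_n\bigl(\mu_n(A)+\mu_n(B)\bigr)=\lim_n\mu_n(A)+\lim_n\mu_n(B)=\lambda(A)+\lambda(B)$ holds because a limit of a sum of two increasing sequences splits. For countable additivity, let $A=\bigcup_k A_k$ be a disjoint union in $\mathscr{B}(\Omega)$; then $\lambda(A)=\lim_n\sum_k\mu_n(A_k)$, and since $k\mapsto\mu_n(A_k)$ is nonnegative and increasing in $n$, the monotone convergence theorem applied to the counting measure on $\N$ lets me interchange the limit and the sum, yielding $\lambda(A)=\sum_k\lim_n\mu_n(A_k)=\sum_k\lambda(A_k)$. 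This is the one place where the monotonicity of $(\mu_n)$ is genuinely used, and I expect it to be the only real obstacle; the rest is routine.

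It then remains to identify $\lambda$ with the lattice supremum in $\mathscr{M}(\Omega)$, recalling that the order there is exactly the setwise order on measures. By construction $\lambda(A)\geq\mu_n(A)$ for all $A$ and $n$, so $\lambda-\mu_n\geq 0$, i.e.\ $\lambda$ is an upper bound for the sequence. If $\rho\in\mathscr{M}(\Omega)$ is any other upper bound, then $\rho-\mu_n\geq 0$ gives $\rho(A)\geq\mu_n(A)$ for all $A$ and $n$, whence $\rho(A)\geq\sup_n\mu_n(A)=\lambda(A)$ and therefore $\rho-\lambda\geq 0$. Thus $\lambda$ is the least upper bound, so $\mu=\sup\mu_n=\lambda$ and $\mu(A)=\sup_n\mu_n(A)$ for every $A\in\mathscr{B}(\Omega)$, as claimed.
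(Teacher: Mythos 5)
Your proof is correct, but it takes a different route from the paper. The paper's argument invokes the Radon--Nikodym theorem: since $\mu_n\leq\nu$, each $\mu_n$ has a density $f_n$ with respect to $\nu$, and setting $f\coloneqq\sup_n f_n$ the monotone convergence theorem gives $\int_A f\dx\nu=\sup_n\mu_n(A)$, which shows at one stroke that $A\mapsto\sup_n\mu_n(A)$ is a measure. You instead verify countable additivity of $\lambda(A)\coloneqq\sup_n\mu_n(A)$ by hand, interchanging the monotone limit in $n$ with the sum over the partition via monotone convergence for series. Both are valid; the paper's version is shorter because the Radon--Nikodym theorem packages the measure-theoretic bookkeeping, while yours is more elementary in that it uses only the uniform bound $\mu_n(\Omega)\leq\nu(\Omega)$ and never the absolute continuity $\mu_n\ll\nu$. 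You also make explicit the final identification of the setwise supremum with the lattice supremum in $\mathscr{M}(\Omega)$ (that any setwise upper bound dominates $\lambda$), a step the paper leaves implicit in the phrase ``and thus $(\sup\mu_n)(A)=\sup\mu_n(A)$''; spelling this out is a small but genuine improvement in completeness.
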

\begin{proof}
	Let $f_n$ denote the density of $\mu_n$ with respect to $\nu$ and define $f\coloneqq \sup f_n$. Then
	\[ \int_A f \dx\nu = \sup_{n\in\N} \int_A f_n \dx\nu = \sup_{n\in\N} \mu_n(A)\]
	for all $A\in\mathscr{B}(\Omega)$ by the monotone convergence theorem. Therefore, the mapping $A\mapsto \sup \mu_n(A)$
	defines a measure on $\mathscr{B}(\Omega)$ and thus $(\sup \mu_n)(A) = \sup \mu_n(A)$ for all $A\in\mathscr{B}(\Omega)$.
\end{proof}

\begin{thm}
	Let $(T_n) \subset \mathscr{L}(\mathscr{M}(\Omega),\sigma)$ be a sequence of weakly continuous operators 
	that is order bounded by an element of $\mathscr{L}(\mathscr{M}(\Omega))$.
	Then $\sup T_n$ exists in $\mathscr{L}(\mathscr{M}(\Omega))$ and is weakly continuous.
\end{thm}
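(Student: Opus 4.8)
The plan is to construct the supremum explicitly, as the weakly continuous operator attached to the ``setwise supremum'' of the kernels, and then to verify directly that this operator is the least upper bound. In this way the existence claim and the weak continuity are obtained simultaneously, and no appeal to abstract order completeness of $\mathscr{L}(\mathscr{M}(\Omega))$ is needed beyond what has already been established. First I would reduce to a convenient normal form. Replacing $T_n$ by $T_1\vee\cdots\vee T_n$, which is again weakly continuous by Theorem \ref{thm:main} and leaves both the supremum and the order bound unchanged, I may assume that $(T_n)$ is increasing. Subtracting the weakly continuous operator $T_1$, which affects neither weak continuity nor existence of the supremum, I may further assume that all $T_n$ are positive; the given order bound $U$ is then positive as well. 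Writing $k_n$ for the associated transition kernels, this means $0\leq k_1\leq k_2\leq\cdots$ pointwise and $k_n(x,\;\cdot\;)=T_n\delta_x\leq U\delta_x$ for every $x\in\Omega$.

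Next I would define the candidate kernel $k(x,A)\coloneqq\sup_n k_n(x,A)$. For fixed $x$ the measures $k_n(x,\;\cdot\;)$ form an increasing sequence of positive measures dominated by $U\delta_x$, so Lemma \ref{lem:monlimit} shows that $k(x,\;\cdot\;)$ is again a positive measure with $k(x,A)=\sup_n k_n(x,A)$; this gives property (a) of a transition kernel. For fixed $A$ the function $x\mapsto k(x,A)$ is measurable, being a countable supremum of measurable functions, which is property (b). Finally $0\leq k(x,\Omega)\leq\lVert U\rVert$ uniformly in $x$, so $k$ is a bounded transition kernel. I then let $S\in\mathscr{L}(\mathscr{M}(\Omega),\sigma)$ be the associated weakly continuous operator. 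Since $k\geq k_n$ pointwise, the order isomorphism between transition kernels and weakly continuous operators yields $S\geq T_n$ for every $n$, so $S$ is an upper bound of the sequence.

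The crucial and most delicate step is to show that $S$ is the \emph{least} upper bound, for a competing bound $R\in\mathscr{L}(\mathscr{M}(\Omega))$ need not be weakly continuous and must therefore be controlled only through its action on positive measures. The key computation is that, for $\mu\geq 0$ and $A\in\mathscr{B}(\Omega)$, the integrands $x\mapsto k_n(x,A)$ are nonnegative and increase to $k(x,A)$, so the monotone convergence theorem gives
\[
(S\mu)(A)=\int_\Omega k(x,A)\dx\mu(x)=\sup_n\int_\Omega k_n(x,A)\dx\mu(x)=\sup_n (T_n\mu)(A).
\]
If now $R\geq T_n$ for all $n$, then $(R-T_n)\mu$ is a positive measure, so $(R\mu)(A)\geq (T_n\mu)(A)$ for every $A$; taking the supremum over $n$ gives $(R\mu)(A)\geq (S\mu)(A)$ for every $A$, whence $R\mu\geq S\mu$. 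As $\mu\geq 0$ was arbitrary, the positive operator $R-S$ dominates $0$ on the whole positive cone, so $R\geq S$. Hence $S=\sup_n T_n$ exists in $\mathscr{L}(\mathscr{M}(\Omega))$ and is weakly continuous.

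I expect the main obstacle to be precisely this last identification. Everything hinges on converting operator inequalities into setwise inequalities of measures and on interchanging the supremum with the integral; the two measure-theoretic inputs that make this rigorous are Lemma \ref{lem:monlimit}, which guarantees that the setwise supremum of the dominated increasing measures $k_n(x,\;\cdot\;)$ is again a measure and thereby that $k$ really is a transition kernel, and the monotone convergence theorem, which produces the identity $(S\mu)(A)=\sup_n(T_n\mu)(A)$ on which the least-upper-bound property rests.
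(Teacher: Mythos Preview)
Your proof is correct and follows essentially the same route as the paper: the same reduction to an increasing positive sequence via Theorem \ref{thm:main}, the same definition of $k(x,A)=\sup_n k_n(x,A)$, and the same appeals to Lemma \ref{lem:monlimit} and monotone convergence. The only cosmetic difference is that the paper first invokes order completeness of $\mathscr{L}(\mathscr{M}(\Omega))$ to get an abstract supremum $S$ and then identifies it with the kernel operator via the formula $(\sup T_n)\mu=\sup(T_n\mu)$ for increasing sequences, whereas you construct the kernel operator first and verify the least-upper-bound property by hand; both arguments are equivalent.
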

\begin{proof}
	Since $\mathscr{M}(\Omega)$ is order complete, $S \coloneqq \sup T_n$ exists in $\mathscr{L}(\mathscr{M}(\Omega))$.
	It remains to show that $S\in\mathscr{L}(\mathscr{M}(\Omega),\sigma)$.

	Note that by Theorem \ref{thm:main}, $T_1\vee\dots\vee T_n$ is again weakly continuous. Thus, 
	replacing $T_n$ by $T_1\vee \dots \vee T_n - T_1$ and $S$ by $S-T_1$,
	we may assume that $(T_n)$ is increasing and $T_n \geq 0$ for all $n\in\N$.
	We denote by $k_n$ the transition kernel associated with $T_n$. For $x\in \Omega$ and $A\in \mathscr{B}(\Omega)$ we define
	\[ k(x,A) \coloneqq \sup_{n\in\N} k_n(x,A) \leq (S\delta_x)(A) \leq \lVert S\rVert.\]
	Then $k(\;\cdot\;,A)$ is measurable for all $A\in\mathscr{B}(\Omega)$
	and $k(x,\;\cdot\;)$ is a measure by Lemma \ref{lem:monlimit}.
	Hence, $k$ is a bounded transition kernel.
	Since $(T_n)$ is increasing, we have for all $\mu\in\mathscr{M}(\Omega)_+$ that
	\[ (\sup T_n) \mu =  \sup (T_n \mu) = \sup_{n\in\N} \int_\Omega k_n(x, \; \cdot\; )\dx\mu(x) 
	= \int_\Omega k(x,\;\cdot\;)\dx\mu(x),\]
	where the last identity follows from Lemma \ref{lem:monlimit} and the monotone convergence theorem.
\end{proof}

The following example shows that $\mathscr{L}(\mathscr{M}(\Omega),\sigma)$ is not order complete, i.e.\ 
not every order bounded set has a supremum.

\begin{example}
\label{ex:notordercomplete}
	Let $\Omega$ be a Polish space such that there exists an unmeasurable set $E\subset \Omega$, e.g.\ $\Omega=\R$.
	Let $\mu\in\mathscr{M}(\Omega)$ a probability measure.
	For each $x\in E$ we consider the weakly continuous rank one operator $T_x \coloneqq \mathds{1}_{\{x\}} \otimes \mu$. 
	Then the set $\mathscr{T} \coloneqq \{ T_x : x\in E\}$ is dominated by $\mathds{1}\otimes \mu$.
	We show that $S\coloneqq \sup_{x\in E} T_x$ is not weakly continuous. 
	If $x \in \Omega\setminus E$ and $P$ is the band projection onto $\{\delta_x\}^{\bot\bot}$, then $S(I-P)$ is 
	an upper bound of $\mathscr{T}$ and hence $(S\delta_x)(\Omega) \leq (S(I-P)\delta_x)(\Omega)=0$.
	If $x\in E$, then $(S\delta_x)(\Omega)\geq (T_x\delta_x)(\Omega) = 1$ for all $x\in E$.
	Therefore, $S^*\mathds{1}$ is not a measurable function. This proves the claim.
\end{example}

\section{Stability of Transition Semigroups}
\label{sec:doob}

As a consequence of the lattice structure of $\mathscr{L}(\mathscr{M}(\Omega),\sigma)$, we obtain
a version of Doob's theorem on convergence of semigroups on the space of measures to a projection
onto their fixed space.

We start with recalling some terminology from \cite{daprato1996}.

\begin{defn}
	Let $\mathscr{T} =(T(t))_{t\geq 0} \subset \mathscr{L}(\mathscr{M}(\Omega),\sigma)$ be a semigroup of weakly continuous operators
	with transition kernels $k_t$ such that $T(0)=I$.
	Then $\mathscr{T}$ is called \emph{Markovian} if $k_t(x,\;\cdot\;)$ is a probability measure for all
	$x\in \Omega$ and $t\geq 0$.
	If
	\[ \lim_{t\to 0} (T(t)f)(x) = f(x)\]
	for all $f\in C_b(\Omega)$ and all $x\in\Omega$, then $\mathscr{T}$ is said to be \emph{stochastically continuous}.
	For $t_0>0$, the semigroup $\mathscr{T}$ is called $t_0$-regular if 
	the measures $k_{t_0}(x,\;\cdot\;)$ and $k_{t_0}(y,\;\cdot\;)$ are equivalent for all $x,y\in\Omega$.
\end{defn}

\begin{rem}
\label{rem:t0-regular}
	If $\mathscr{T}=(T(t))_{t\geq 0}$ is $t_0$-regular for some $t_0 >0$, then $\mathscr{T}$ is $s$-regular for all $s\geq t_0$
	and the measures $k_s(x,\;\cdot\;)$ and $k_t(y,\;\cdot\;)$ are equivalent for all $s,t\geq t_0$ and $x,y\in \Omega$.
	Indeed, for $A\in\mathscr{B}(\Omega)$ and $r>0$ we have
	\begin{align}
	 k_{t_0+r}(x,A) = (T(t_0 +r)\delta_x)(A) = \int_\Omega k_{t_0}(y,A) k_{r}(x,\dx y).
	 \label{eqn:semilaw}
	 \end{align}
	Thus,
	$k_{t_0+r}(x,\;\cdot\;) \ll k_{t_0}(y,\;\cdot\;)$ for all $r>0$, $x\in\Omega$ and $y\in\Omega$.
	Conversely, if 
	\[ (T(t_0+r)\delta_x)(A) =k_{t_0+r}(x,A)  = 0\]
	for some $A\in\mathscr{B}(\Omega)$, it follows from \eqref{eqn:semilaw} that
	$k_{t_0}(y,A)=0$ for some and hence all $y\in\Omega$.
\end{rem}

Our main tool for the proof of Doob's theorem is a generalized version of a theorem by Greiner \cite[Kor.\ 3.9]{greiner1982}
that can be formulated as follows, see \cite[Prop 4.1]{gerlach2012b}.
Recall that a semigroup on a Banach lattice $E$ is said to be \emph{irreducible} if 
the only closed ideals in $E$ that are invariant under the action of every operator of the semigroup 
are $E$ and $\{0\}$.

\begin{thm}
\label{thm:greiner}
	Let $\mathscr{T} = (T(t))_{t \geq 0}$ be a positive, bounded and irreducible $C_0$-semigroup on 
	a Banach lattice with order continuous norm $E$ such that
	$\fix(T(t))$ is independent of $t>0$ and nontrivial.
	Assume that $T(r)\wedge T(s) >0$ for some $r>s\geq 0$.
	Then there exists a strictly positive $x' \in \fix(\mathscr{T'})$ and a quasi-interior point $e \in \fix(\mathscr{T})$ of $E_+$
	such that
	\[ \lim_{t\to\infty} T(t)x = \applied{x'}{x} e\]
	for all $x\in E$.
\end{thm}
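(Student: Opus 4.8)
The plan is to determine the limit operator explicitly and then prove convergence by analysing the peripheral spectrum, using the order structure to exclude oscillations. I would first use positivity and irreducibility to pin down $\fix(\mathscr{T})$. Since the fixed space is nontrivial, a Perron--Frobenius argument shows it contains a nonzero positive vector; irreducibility then forces every positive fixed vector to be a quasi-interior point of $E_+$ and forces $\fix(\mathscr{T})$ to be one-dimensional, say $\fix(\mathscr{T})=\R e$ with $e$ quasi-interior. (One-dimensionality rests on the fact that the fixed space of a positive mean ergodic semigroup is a sublattice, which together with irreducibility leaves no room for two lattice-independent positive fixed vectors, since quasi-interior vectors cannot be disjoint.) Passing to the adjoint semigroup $\mathscr{T}'$ and arguing by separation, I would produce a fixed functional $x'\in\fix(\mathscr{T}')$ which, again by irreducibility, is strictly positive. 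After normalising $\applied{x'}{e}=1$, the rank-one operator $P$ given by $Px=\applied{x'}{x}\,e$ is a positive projection onto $\fix(\mathscr{T})$, and I expect it to coincide with the mean ergodic projection: boundedness of $\mathscr{T}$ together with order continuity of the norm makes the relevant orbits relatively weakly compact, so that the Cesàro means of $T(t)$ converge strongly to $P$.

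The substantial part is to upgrade this mean convergence to the genuine limit $\lim_{t\to\infty}T(t)x=\applied{x'}{x}\,e$, and for this I would show that the peripheral spectrum of the semigroup is trivial. The hypothesis that $\fix(T(t))$ is independent of $t>0$ already disposes of the continuous-time peripheral point spectrum: if $ib$ with $b\neq 0$ were an eigenvalue of the generator with eigenvector $h$ in the complexification $E_\C$, then $T(2\pi/b)h=h$, so that $h$ lies in $\fix(T(2\pi/b))$, which by hypothesis coincides with $\fix(\mathscr{T})$; this contradicts $T(t)h=e^{ibt}h\neq h$ for small $t>0$. By the Jacobs--de Leeuw--Glicksberg splitting this collapses the reversible part of $E$ onto $\fix(\mathscr{T})$, leaving $E=\R e\oplus E_{\mathrm{s}}$ with $E_{\mathrm{s}}$ the closed subspace of flight vectors.

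The main obstacle is that a trivial peripheral point spectrum still permits cyclic, periodic-type behaviour that obstructs an actual (as opposed to merely averaged) limit, and it is precisely here that the hypothesis $T(r)\wedge T(s)>0$ becomes indispensable. Following Greiner's structure theory for the peripheral spectrum of an irreducible positive semigroup, any nontrivial peripheral structure would manifest itself as a finite family of pairwise disjoint bands that the semigroup permutes cyclically in time; for suitably chosen $r>s$ the operators $T(r)$ and $T(s)$ would then map into lattice-disjoint parts and hence satisfy $T(r)\wedge T(s)=0$. The assumption $T(r)\wedge T(s)>0$ rules this out and forces the peripheral spectrum to reduce to $\{1\}$. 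The final and most delicate step is the Tauberian passage from this spectral information to the strong limit $T(t)x\to Px$; I would carry it out by exploiting the quasi-interior fixed point $e$, which confines the orbit of an order-bounded $x$ to an order interval $[-\lambda e,\lambda e]$ on which $T(t)e=e$ and positivity furnish the requisite compactness and monotonicity, order continuity of the norm then converting the weak decay on $E_{\mathrm{s}}$ into genuine norm convergence.
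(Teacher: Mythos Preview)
The paper does not prove this theorem. It is quoted as a tool and attributed to \cite[Prop.~4.1]{gerlach2012b}, itself a generalisation of Greiner \cite[Kor.~3.9]{greiner1982}; no argument is supplied in the present paper, so there is nothing here against which to compare your sketch.

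On the substance of your outline, there is a genuine confusion about the role of the hypothesis $T(r)\wedge T(s)>0$. You correctly derive $\sigma_p(G)\cap i\R\subset\{0\}$ from the assumption that $\fix(T(t))$ is independent of $t>0$; but once that is done there is no ``cyclic permutation of pairwise disjoint bands'' left to exclude, since such a band decomposition is precisely what nontrivial peripheral eigenvalues would produce. The condition $T(r)\wedge T(s)>0$ therefore cannot be serving the purpose you describe (and note also that $r,s$ are \emph{given}, not ``suitably chosen''). In the Greiner--Gerlach argument its actual role is the one you relegate to the vague final paragraph: it is the mechanism that upgrades mean ergodicity to genuine strong convergence. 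Trivial peripheral point spectrum together with relative weak compactness of orbits yields only that $0$ lies in the weak closure of $\{T(t)x-Px:t\ge 0\}$, not that $T(t)x\to Px$ in norm, and the order-interval sketch you give does not close that gap. The positivity of $T(r)\wedge T(s)$ is what furnishes the missing contraction-type ingredient (in the spirit of a $0$--$2$ law for positive contractions) that forces norm convergence on $\ker x'$; without an explicit argument along these lines your proposal remains incomplete at the decisive step.
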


A consequence of this theorem for semigroups of kernel operators can be found in \cite{arendt2008}.

Now we are able to prove the announced stability result that is originally due to Stettner \cite[Thm 1]{stettner1994}
who gave a probabilistic proof.

\begin{thm}[Doob]
\label{thm:doob}
	Let $\mathscr{T}=(T(t))_{t\geq 0} \subset \mathscr{L}(\mathscr{M}(\Omega),\sigma)$ be a stochastically continuous
	Markovian semigroup and let $\mu \in\mathscr{M}(\Omega)$ be an invariant probability measure. If $\mathscr{T}$ is
	$t_0$-regular for some $t_0>0$, then
	\[ \lim_{t\to\infty} T(t)\nu = \nu(\Omega) \cdot \mu\]
	in the norm of $\mathscr{M}(\Omega)$ as $t\to\infty$.
	Moreover, $\mu$ is the unique invariant probability measure 
	and equivalent to all $k_t(x,\;\cdot\;)$ for $t\geq t_0$ and $x\in\Omega$.
\end{thm}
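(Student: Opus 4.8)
The plan is to realise $(T(t))$ as a $C_0$-semigroup of positive contractions on $L^1(\Omega,\mu)$ and then to invoke Theorem~\ref{thm:greiner}. The starting point is the following consequence of $t_0$-regularity. By Remark~\ref{rem:t0-regular} the measures $k_t(x,\;\cdot\;)$ with $x\in\Omega$ and $t\ge t_0$ are mutually equivalent; combining this with the invariance identity $\mu=\int_\Omega k_t(x,\;\cdot\;)\dx\mu(x)$ one checks that $\mu\sim k_t(x,\;\cdot\;)$ for every $x\in\Omega$ and $t\ge t_0$, which already gives the final assertion of the theorem. Moreover, for $t\ge t_0$ the operator $T(t)$ maps all of $\mathscr{M}(\Omega)$ into the band $L^1(\mu)=\{\nu:\nu\ll\mu\}$, since $\mu(A)=0$ forces $k_t(x,A)=0$ for every $x$ and hence $(T(t)\nu)(A)=0$; a similar argument based on the semigroup law shows that $L^1(\mu)$ is invariant under $T(t)$ for every $t\ge 0$. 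I thus obtain a semigroup $(S(t))$ of positive operators on $L^1(\mu)$. As $\mu$ is invariant and the semigroup Markovian, each $S(t)$ preserves the integral against $\mu$ and fixes the constant function $\mathds{1}$; such doubly stochastic operators are contractions on every $L^p(\mu)$.

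The next step is strong continuity of $(S(t))$. Stochastic continuity together with dominated convergence yields $\langle g,T(t)\nu\rangle\to\langle g,\nu\rangle$ for $g\in C_b(\Omega)$, that is, $\int_\Omega g\,S(t)f\dx\mu\to\int_\Omega gf\dx\mu$ for $f\in L^2(\mu)$; since $C_b(\Omega)$ is dense in $L^2(\mu)$ and $\lVert S(t)\rVert_{L^2}\le 1$, this extends to all test functions in $L^2(\mu)$, so $(S(t))$ is weakly continuous on the Hilbert space $L^2(\mu)$. The elementary estimate $\lVert S(t)f-f\rVert_2^2\le 2\lVert f\rVert_2^2-2\langle S(t)f,f\rangle\to 0$ then gives strong continuity on $L^2(\mu)$, and since $L^2(\mu)$ is dense in $L^1(\mu)$ and the $S(t)$ are uniformly bounded, strong continuity transfers to $L^1(\mu)$. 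Hence $(S(t))$ is a $C_0$-semigroup of positive contractions on the Banach lattice $L^1(\mu)$, which has order continuous norm.

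It remains to verify the hypotheses of Theorem~\ref{thm:greiner}; this is where the probabilistic input of $t_0$-regularity is converted into the functional-analytic conditions, and I expect it to be the main obstacle. A closed invariant ideal of $L^1(\mu)$ has the form $L^1(B)$ for some $B\in\mathscr{B}(\Omega)$, and its invariance forces $k_t(x,\Omega\setminus B)=0$ for $\mu$-a.e.\ $x\in B$; by the equivalence $k_t(x,\;\cdot\;)\sim\mu$ this means $\mu(B)\in\{0,1\}$, so $(S(t))$ is irreducible. The constant $\mathds{1}$ lies in $\fix(S(t))$, which is therefore nontrivial. For $t\ge t_0$ the operator $S(t)$ is an integral operator on $L^1(\mu)$ whose kernel is $\mu\otimes\mu$-a.e.\ strictly positive, and a standard Perron--Frobenius argument shows that every fixed function is constant; since $S(t)f=f$ implies $S(nt)f=f$, choosing $n$ with $nt\ge t_0$ gives $\fix(S(t))=\R\mathds{1}$ for all $t>0$, independently of $t$. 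Finally, for $r,s\ge t_0$, recalling from the introduction that the lattice operations of kernel operators on $L^1$ are computed pointwise on their kernels, the infimum $S(r)\wedge S(s)$ is the integral operator with the strictly positive kernel $\min\{p_r,p_s\}$, so that $S(r)\wedge S(s)>0$. I expect the measure-theoretic bookkeeping behind the strict positivity and joint measurability of these kernels to require the most care.

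With all hypotheses met, Theorem~\ref{thm:greiner} gives $S(t)f\to\langle x',f\rangle\,e$ with $e\in\fix(\mathscr{S})=\R\mathds{1}$ and $x'$ strictly positive. As $S(t)$ preserves the integral against $\mu$, the invariant functional is $f\mapsto\int_\Omega f\dx\mu$, and after normalising $e=\mathds{1}$ one obtains $S(t)f\to\bigl(\int_\Omega f\dx\mu\bigr)\mathds{1}$ in $L^1(\mu)$. For an arbitrary $\nu\in\mathscr{M}(\Omega)$ I then exploit the smoothing property: for $t\ge t_0$ one has $T(t)\nu=S(t-t_0)g$, where $g$ is the $\mu$-density of $T(t_0)\nu$ with $\int_\Omega g\dx\mu=\nu(\Omega)$, so that $T(t)\nu\to\nu(\Omega)\mu$ in total variation. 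Applying this convergence to any invariant probability measure $\rho$ forces $\rho=\rho(\Omega)\mu=\mu$, which yields uniqueness and completes the proof.
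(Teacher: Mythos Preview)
Your argument is correct and reaches the same conclusion, but it takes a route genuinely different from the paper's. The paper's proof is designed as an application of its main results: it computes $S(s)\wedge S(r)$ by invoking Theorem~\ref{thm:weakcontlattice} (together with Theorem~\ref{thm:main}), which says that the infimum of two weakly continuous operators is again weakly continuous with transition kernel $k_s(x,\cdot)\wedge k_r(x,\cdot)$; since these measures are equivalent they are not disjoint, so this kernel is nowhere zero. For the $t$-independence of $\fix(S(t))$ the paper does not compute the fixed space explicitly but argues spectrally: it shows that $S(t)$ is expanding for $t\ge t_0$, deduces $\sigma_p(G)\cap i\R\subset\{0\}$ via \cite{gerlach2012b}, and then concludes $\fix(S(t))=\ker G$ from \cite{nagel2000}. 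Strong continuity is quoted from \cite{hille2009}.

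You bypass the paper's lattice theory of weakly continuous operators entirely. After noting that $S(t)$ for $t\ge t_0$ is a classical integral operator on $L^1(\mu)$ with $\mu\otimes\mu$-a.e.\ strictly positive density, you compute $S(r)\wedge S(s)$ and identify $\fix(S(t))=\R\mathds{1}$ using the standard band structure of kernel operators on $L^1$ and a direct Perron--Frobenius argument; strong continuity comes from the $L^2$ contraction trick. Your approach is more elementary and avoids the spectral detour, but it trades the paper's transition-kernel calculus for the task of establishing joint measurability and a.e.\ strict positivity of the Radon--Nikodym densities $p_t(x,y)$, which you rightly flag as the point needing care. The paper's route sidesteps precisely this issue by working directly with the measures $k_t(x,\cdot)$, which is what its lattice theory is built for.
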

\begin{proof}
	Let us denote by $E\coloneqq \{\mu\}^{\bot\bot}$ the band generated by the invariant measure $\mu$, which 
	consists precisely of those measures that are absolutely continuous with respect to $\mu$.
	In view of the equality
	\[ \mu = T(t)\mu = \int_\Omega k_{t}(x,\;\cdot\; )\dx\mu(x) \]
	it follows from the $t_0$-regularity of $\mathscr{T}$ 
	and Remark \ref{rem:t0-regular} 
	that $\mu$ is equivalent to $k_t(x,\;\cdot\;)$ for all $x\in\Omega$ and $t\geq t_0$.
	Hence, for every measure $\nu>0$,
	\[ T(t_0)\nu= \int_\Omega k_{t_0}(x,\;\cdot\;) \dx\nu(x)\]
	is equivalent to $\mu$, so that  $T(t_0)\nu \in E$ for every measure $\nu >0$.
	Replacing $\nu$ by $T(t_0)\nu$, it therefore suffices to show that
	\[ \lim_{t\to\infty} T(t)\nu = \nu(\Omega) \cdot \mu\]
	for all $\nu\in E$.

	Let $S(t) \coloneqq T(t)_{\mid E}$ denote the restriction of $T(t)$ to $E$. Then
	the semigroup $\mathscr{S} \coloneqq (S(t))_{t\geq 0}$ is clearly contractive and positive.
	Moreover, it is strongly continuous by \cite[Thm 4.6]{hille2009}.
	Fix $r>s\geq t_0$.  Since the measures $k_s(x,\;\cdot\;)$ and $k_r(x,\;\cdot\;)$ are equivalent for all $x\in \Omega$, 
	they cannot by disjoint. Hence, the operator 
	\[ S\coloneqq S(s) \wedge S(r) = ( T(s)\wedge T(r) )_{\mid E}\]
	is not zero
	as, by Theorem \ref{thm:weakcontlattice}, it is weakly continuous and given by the transition kernel 
	$q(x, \;\cdot\;) \coloneqq k_s(x,\;\cdot\;) \wedge k_r(x,\;\cdot\;)$ satisfying $q(x,\Omega)>0$ for all $x\in \Omega$.

	In order to prove that the fixed space $\fix(S(t))$ is independent of $t>0$, we first observe that
	for $t\geq t_0$ the operator $S(t)$ is expanding (or strongly positive), i.e.\ $S(t)\nu$ is a
	quasi-interior point of $E_+$ for every $\nu\in E$, $\nu >0$. Indeed, for $A\in \mathscr{B}(\Omega)$ with $\mu(A)>0$
	we know that $k_t(x,A)>0$ for all $x\in \Omega$ and therefore
	\[ (S(t)\nu)(A) = \int_\Omega k_t(x,A) \dx\nu(x) >0\]
	for every measure $\nu >0$.
	Since the semigroup $\mathscr{S}$ consists of expanding operators, we have that
	\[ \sigma_p(G) \cap i\R \subset \{0\} \]
	where $G$ denotes the generator of $\mathscr{S}$.
	This can be seen as in the proof of \cite[Thm 3.1]{gerlach2012b}, where it was assumed that $\mathscr{S}$ consists
	of kernel operators. But the same proof works for semigroups of expanding operators, see \cite[Rem 3.5(b)]{gerlach2012b}.
	Finally, by \cite[IV 3.8]{nagel2000} one has that
	\[ \fix(S(t)) = \overline{ \lh}_{n\in\Z} \ker\left(\frac{2\pi i n}{t} - G\right) = \ker G\]
	for all $t>0$.
	Since $\mathscr{S}$ is irreducible as it consists of expanding operators, we proved that
	$\mathscr{S}$ satisfies the assumptions of Theorem \ref{thm:greiner}
	and the assertion follows.
\end{proof}

	Using a discrete version of Theorem \ref{thm:greiner}, see  \cite[Prop 4.1]{gerlach2012b},
	one obtains the following result for Markov chains.
\begin{thm}
	Let $T\in \mathscr{L}(\mathscr{M}(\Omega),\sigma)$ with transition kernel $k$
	such that the measures $k(x,\;\cdot\;)$ and $k(y,\;\cdot\;)$ are equivalent probability measures for $x,y\in \Omega$.
	If there exists an invariant probability measure $\mu$, then $\lim_{n\to\infty} T^n \nu = \nu(\Omega)\cdot \mu$
	for every measure $\nu$ in the norm of $\mathscr{M}(\Omega)$.
\end{thm}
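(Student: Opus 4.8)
The plan is to mirror the proof of Theorem~\ref{thm:doob}, replacing the continuous-time semigroup $(T(t))$ by the discrete iterates $(T^n)$ and the generator argument by a direct analysis of the peripheral point spectrum of the single operator $T$.

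First I would reduce to the band $E \coloneqq \{\mu\}^{\perp\perp}$ generated by the invariant measure, which consists exactly of the measures absolutely continuous with respect to $\mu$ and carries an order continuous norm (it is lattice isomorphic to $L^1(\mu)$). Writing $\mu = T\mu = \int_\Omega k(x,\;\cdot\;)\dx\mu(x)$ and using that the measures $k(x,\;\cdot\;)$ are mutually equivalent probability measures, I would show $\mu \sim k(x,\;\cdot\;)$ for every $x \in \Omega$: if $\mu(A)=0$ then $k(\;\cdot\;,A)=0$ $\mu$-almost everywhere and hence, by equivalence, identically, while the converse is immediate. The same computation shows that $T\nu$ is equivalent to $\mu$ for every measure $\nu > 0$, so that $T\nu \in E$. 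Replacing $\nu$ by $T\nu$ (which preserves the total mass, as $T$ is Markovian) therefore reduces the assertion to proving $\lim_n T^n\nu = \nu(\Omega)\mu$ for $\nu \in E$, and a Jordan decomposition argument extends the positive case to arbitrary $\nu$.

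Next I would study $S \coloneqq T_{\mid E}$, a positive contraction on $E$. The key point is that $S$ is expanding: for every $A$ with $\mu(A) > 0$ one has $k(x,A) > 0$ for all $x\in\Omega$, whence $(S\nu)(A) = \int_\Omega k(x,A)\dx\nu(x) > 0$ for every $\nu > 0$ in $E$, so $S\nu$ is a quasi-interior point of $E_+$. This yields irreducibility immediately. It also forces the only eigenvalue of $S$ of modulus one to be $1$, equivalently $\fix(S^n)=\fix(S)$ for all $n$: if $Sx = \lambda x$ with $\lvert\lambda\rvert=1$, then $\lvert x\rvert \leq S\lvert x\rvert$ and contractivity forces $S\lvert x\rvert = \lvert x\rvert$, after which the strict positivity of the associated density and the equality case in the triangle inequality force $x$ to have constant phase, so that $\lambda = 1$. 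This is the discrete counterpart of the condition $\sigma_p(G)\cap i\R \subset \{0\}$ and can be argued exactly as in \cite[Thm 3.1, Rem 3.5(b)]{gerlach2012b}. Finally, since the two-step kernel satisfies $k^{(2)}(x,\;\cdot\;) \sim \mu \sim k(x,\;\cdot\;)$, these measures are not disjoint, so $S \wedge S^2 > 0$, supplying the required aperiodicity hypothesis.

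With these facts verified, the discrete version of Theorem~\ref{thm:greiner} from \cite[Prop 4.1]{gerlach2012b} yields a strictly positive $x' \in \fix(S')$ and a quasi-interior point $e \in \fix(S)$ of $E_+$ such that $S^n\nu \to \applied{x'}{\nu} e$ for all $\nu \in E$. It then remains to identify the limit: the fixed point $\mu \in E$ satisfies $\mu = \applied{x'}{\mu} e$, so $e$ is a positive multiple of $\mu$, and the Markov property makes the total mass functional $\nu \mapsto \nu(\Omega)$ a strictly positive element of $\fix(S')$, hence proportional to $x'$ by uniqueness of the strictly positive fixed functional. Combining the two normalizations with $\mu(\Omega) = 1$ collapses the constants and gives $S^n\nu \to \nu(\Omega)\mu$, which completes the proof. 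I expect the main obstacle to be the spectral step, namely showing that the expanding property rules out nontrivial peripheral eigenvalues, since it is precisely this step that upgrades mere ergodic (Ces\`aro) convergence to genuine norm convergence of $T^n\nu$.
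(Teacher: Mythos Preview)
Your proposal is correct and follows precisely the route the paper intends: the paper gives no proof for this theorem beyond the remark that it follows from the discrete version of Theorem~\ref{thm:greiner} in \cite[Prop~4.1]{gerlach2012b}, and your sketch is the natural discrete adaptation of the proof of Theorem~\ref{thm:doob}. The reduction to $E=\{\mu\}^{\perp\perp}$, the verification that $S=T_{\mid E}$ is expanding and hence irreducible with peripheral point spectrum $\{1\}$ (so $\fix(S^n)=\fix(S)$), and the use of Theorem~\ref{thm:main} to see that $S\wedge S^2$ has kernel $k(x,\cdot)\wedge k^{(2)}(x,\cdot)>0$ are exactly the ingredients the continuous proof supplies, with the generator argument replaced by your direct eigenvalue analysis.
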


\end{document}